\theoremstyle{plain}
\newtheorem{thm}{Theorem}
\newtheorem{lemma}{Lemma}
\newtheorem{defin}{Definition}
\newtheorem{assump}{Assumption}
\theoremstyle{remark}
\newtheorem{rem}{Remark}
\def\convd{\xrightarrow{\mathcal{D}_{\sigma_0}}}
\def\convp{\xrightarrow{\mathbb{P}_{\sigma_0}}}
\def\ex{{\rm {\mathbb{E}_{\sigma_0}\,}}}
\def\cprime{$'$}
\begin{document}

\title[Bayesian estimation for a time-inhomogeneous BM]{Consistent non-parametric Bayesian estimation for a time-inhomogeneous Brownian motion}

\author{Shota Gugushvili}
\address{Mathematical Institute\\
Leiden University\\
P.O. Box 9512\\
2300 RA Leiden\\
The Netherlands}
\email{shota.gugushvili@math.leidenuniv.nl}

\author{Peter Spreij}
\address{Korteweg-de Vries Institute for Mathematics\\
University of Amsterdam\\
PO Box 94248\\
1090 GE Amsterdam\\
The Netherlands}
\email{spreij@uva.nl}

\thanks{The research of the first author was supported by The Netherlands Organisation for Scientific Research (NWO)}

\subjclass[2000]{Primary: 62G20, Secondary: 62M05}

\keywords{Dispersion coefficient; Non-parametric Bayesian estimation; Posterior consistency; Time-inhomogeneous Brownian motion}

\begin{abstract}

We establish posterior consistency for non-parametric Bayesian estimation of the dispersion coefficient of a time-inhomogeneous Brownian motion.

\end{abstract}

\date{\today}

\maketitle

\section{Introduction}
\label{intro}

Consider a simple linear stochastic differential equation
\begin{equation}
\label{sde}
dX_t=\sigma(t)dW_t, \quad X_0=x, \quad t\in[0,1],
\end{equation}
where $W$ is a Brownian motion on some given probability space and the initial condition $x$ and the square integrable dispersion coefficient $\sigma$ are deterministic. We interpret equation \eqref{sde} as a short-hand notation for the integral equation
\begin{equation*}
X_t=x+\int_0^t \sigma(s)dW_s, \quad t\in[0,1],
\end{equation*}
where the integral is the Wiener integral of $\sigma$ with respect to the Brownian motion $W.$ The process $X$ is thus a time-inhomogeneous Brownian motion. The function $\sigma$ can be viewed as a signal transmitted through a noisy channel, where the noise (modelled by the Brownian motion) is multiplicative. Note that $X$ is a Gaussian process with mean $m(t)=x$ and covariance
$
\rho(s,t)=\int_0^{s \wedge t} \sigma^2(u)du.
$
By $\mathbb{P}_{\sigma}$ we will denote the law of the solution $X$ to \eqref{sde}.

Assume for simplicity that $x=0$ and denote $t_{i,n}=i/n,i=0,\ldots,n.$ Suppose that corresponding to the true dispersion coefficient $\sigma=\sigma_0,$ one has a sample $X_{t_{i,n}},i=1,\ldots,n,$ from the process $X$ at his disposal. Assuming that $\sigma_0$ belongs to some non-parametric class $\mathcal{X}$ of dispersion coefficients, our goal is to estimate $\sigma_0.$ This problem for a similar model was treated in \cite{genon92}, \cite{hoffmann97} and \cite{soulier98} using a frequentist approach. However, a non-parametric Bayesian approach to estimation of $\sigma_0$ is also possible. The likelihood corresponding to the observations $X_{t_{i,n}}$ is given by
\begin{equation}
\label{likelih}
L_n(\sigma)=\prod_{i=1}^{n} \left\{ \frac{1}{\sqrt{2\pi\int_{t_{i-1,n}}^{t_{i,n}}\sigma^2(u)du}}\psi\left( \frac{X_{t_{i,n}}-X_{t_{i-1,n}}}{\sqrt{\int_{t_{i-1,n}}^{t_{i,n}}\sigma^2(u)du}} \right) \right\},
\end{equation}
where $\psi(u)=\exp(-u^2/2).$ For a prior $\Pi$ on $\mathcal{X},$ Bayes' formula yields the posterior measure
\begin{equation*}
\Pi(\Sigma|X_{t_{0,n}}\ldots,X_{n,n})=\frac{\int_{\Sigma} L_n(\sigma) \Pi(d\sigma)}{ \int_{\mathcal{X}} L_n(\sigma) \Pi(d\sigma) }.
\end{equation*}
of any measurable set $\Sigma\subset\mathcal{X}.$ In the Bayesian paradigm, the posterior encodes all the information required for inferential purposes. Once the posterior is available, one can proceed to computation of other quantities of interest in Bayesian statistics, such as Bayes point estimates, Bayes factors and so on.

It has been recognised since long that Bayesian procedures should be theoretically grounded through establishing posterior consistency, see e.g.\ \cite{diaconis86}. In our context posterior consistency will mean that for every neighbourhood $U_{\sigma_0}$ of $\sigma_0$ (in a suitable topology)
\begin{equation}
\label{consistency}
\Pi(U_{\sigma_0}^c|X_{t_{0,n}},\ldots,X_{t_{n,n}})\convp 0
\end{equation}
as $n\rightarrow\infty$ (the notation $ \xi_n \convp \xi$ in \eqref{consistency} and below stands for convergence of a sequence of random variables $\xi_n$ to a random variable $\xi$ in $\mathbb{P}_{\sigma_0}$-probability). In other words, a consistent Bayesian procedure asymptotically puts posterior mass equal to one on every fixed neighbourhood of the true parameter. This is similar to the study of consistency of frequentist estimators.  A method that does not appear to work in the idealised setting when an infinite amount of data is available (formalised by assuming that the sample size $n\rightarrow\infty$) should also be unattractive in the finite sample setting. Hence the importance of a study of posterior consistency. The situation is typically quite subtle in the infinite-dimensional Bayesian setting: it is known that a careless choice of the prior might render a Bayes procedure inconsistent.  For an introduction to consistency issues in Bayesian non-parametric statistics see e.g.\ \cite{ghosal99} and \cite{wasserman98}.

Our task in this work is to establish \eqref{consistency} under suitable assumptions on the class of dispersion coefficients $\sigma$ and the prior $\Pi.$ Asymptotic properties of Bayesian procedures in estimation problems for stochastic differential equations have been already considered under various setups in \cite{gugu12}, \cite{meulen0}, \cite{meulen}, \cite{panzar} and \cite{pokern}, primarily in the context of non-parametric Bayesian estimation of the drift coefficient of a stochastic differential equation. Computational approaches to non-parametric Bayesian inference for stochastic differential equations were studied in \cite{schauer} and \cite{papa12}. Convenient overviews of the available results are given in \cite{pavliotis12} and \cite{vz13}. However, in the above works dealing with Bayesian asymptotics it is assumed that either a continuous record of observations is available on the solution to a stochastic differential equation, or that the solution is observed at equispaced time points $\Delta,2\Delta,\ldots,n\Delta,$ with asymptotics treated in the latter case under the assumption that $\Delta$ is independent of $n$ and $n\rightarrow\infty.$ Our problem, on the other hand, requires a different approach due to a different sampling scheme and the fact that ergodicity of the solution to a stochastic differential equation, that played a prominent role in most of the previous works on non-parametric Bayesian approach to statistical inference for stochastic differential equations, is irrelevant in our case. Although the setup we consider looks simple, to the best of our knowledge our work is the first one to treat an inference problem for a stochastic differential equation in the so called high-frequency data case when $\Delta=\Delta_n\rightarrow 0$ as $n\rightarrow\infty$ using a non-parametric Bayesian approach. The high-frequency data setting is particularly relevant in financial mathematics, where asset prices are often modelled through stochastic differential equations and where huge amounts of observations on them separated by very short time instances are available. Perhaps the most interesting feature of the present work is the method of proof of posterior consistency, which differs in certain respects from the currently used techniques. See Section \ref{discussion} for a discussion. Also the simplicity of our model should not necessarily be considered a disadvantage: indeed, the model is somewhat similar to the Gaussian white noise model (see e.g.\ Chapter 7, \S 4 in \cite{ibragimov}), which, as is known, has triggered some important developments in mathematical statistics.

The paper is organised as follows: in the next section we formulate our main result dealing with posterior consistency for non-parametric estimation of the dispersion coefficient. Since posterior consistency is closely linked with properties of a prior $\Pi,$ in Section \ref{prior} we provide an example of a reasonable prior satisfying the assumptions made in Section \ref{main}. Section \ref{discussion} contains a brief discussion on the obtained result. The proof of our main theorem is deferred until Section \ref{proofs}, while the Appendix contains two technical lemmas used in Section \ref{proofs}.

\section{Results}
\label{main}

The non-parametric class of dispersion coefficients we will be looking at is given in the following definition.

\begin{defin}
\label{classX} Let $\mathcal{X}$ be the collection of dispersion coefficients $\sigma:[0,1]\rightarrow[\kappa,K],$ such that $\sigma\in\mathcal{X}$ is Lipschitz with Lipschitz constant $M.$ Here $0<\kappa<K<\infty$ and $0<M<\infty$ are three constants independent of the particular $\sigma\in\mathcal{X}.$
\end{defin}

\begin{rem}
\label{constrem}
Note that for a constant $\sigma$ we have $\mathbb{P}_{\sigma}=\mathbb{P}_{-\sigma}.$ A positivity assumption on $\sigma\in\mathcal{X}$ in Definition \ref{classX} can hence be viewed as a simple and natural identifiability requirement. Strict positivity assumption $\sigma\geq\kappa>0$ allows one to escape technical complications when manipulating the likelihood \eqref{likelih} (this condition has already appeared e.g.\ in \cite{hoffmann97}), while the upper bound $\sigma(t)\leq K,t\in[0,1],$ restricts the size of the non-parametric class $\mathcal{X}$ and is reasonable in light of Definition \ref{topology} given below. Finally, Lipschitz continuity of $\sigma$ comes in handy at various stages of the proof of posterior consistency. \qed
\end{rem}

The notion of posterior consistency depends on a topology on $\mathcal{X}.$ 

\begin{defin}
\label{topology}
The topology ${\mathcal{T}}$ on ${\mathcal{X}}$ is the topology induced by the $L_{2}$-norm $\|\cdot\|_{2}.$
\end{defin}

We now formalise the concept of posterior consistency.

\begin{defin}
\label{defconsistency}
Let the prior $\Pi$ be defined on $\mathcal{X}.$ We say that posterior consistency holds, if for any fixed $\sigma_0\in\mathcal{X}$ and every neighbourhood $U_{\sigma_0}$ of $\sigma_0$ in the topology ${\mathcal{T}}$ from Definition \ref{topology} we have
\begin{equation*}
\Pi(U_{\sigma_0}^c|X_{t_{0,n}}\ldots,X_{n,n}) \convp 0
\end{equation*}
as $n\rightarrow\infty.$
\end{defin}

We summarise our assumptions.

\begin{assump}
\label{standing}
Assume that
\begin{enumerate}[(a)]
\item the model \eqref{sde} is given with $x=0$ and $\sigma\in\mathcal{X},$ where $\mathcal{X}$ is defined in Definition \ref{classX},
\item $\sigma_0\in\mathcal{X}$ denotes the true dispersion coefficient,
\item a discrete-time sample $\{X_{t_{i,n}}\}$ from the solution to \eqref{sde} corresponding to $\sigma_0$ is available, where $t_{i,n}=i/n,i=0,\ldots,n.$
\end{enumerate}
\end{assump}

Let
\begin{equation*}
V_{\sigma_0,\varepsilon}=\left\{ {\sigma}\in\mathcal{X}: {\| {\sigma}-\sigma_0\|_{\infty}} <\varepsilon \right\},
\end{equation*}
where $\|\cdot\|_{\infty}$ denotes the $L_{\infty}$-norm. The following is the main result of the paper.

\begin{thm}
\label{mainthm}
Under Assumption \ref{standing} posterior consistency as in Definition \ref{defconsistency} holds, provided the prior $\Pi$ on $\mathcal{X}$ satisfies
\begin{equation}
\label{priorC}
\Pi( V_{\sigma,\varepsilon} )>0
\end{equation}
for any $\varepsilon>0$ and at any $\sigma\in\mathcal{X}.$
\end{thm}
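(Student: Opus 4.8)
The plan is to fix an $L_2$-ball $U=\{\sigma\in\mathcal X:\|\sigma-\sigma_0\|_2<\varepsilon\}$ (it suffices to treat such balls, since every $\mathcal T$-neighbourhood of $\sigma_0$ contains one) and to write the posterior of its complement as the ratio $\Pi(U^c\mid X_{t_{0,n}},\ldots,X_{t_{n,n}})=N_n/D_n$, where
\[
N_n=\int_{U^c}\frac{L_n(\sigma)}{L_n(\sigma_0)}\,\Pi(d\sigma),\qquad
D_n=\int_{\mathcal X}\frac{L_n(\sigma)}{L_n(\sigma_0)}\,\Pi(d\sigma).
\]
I would control both integrals through the normalised log-likelihood ratio $\ell_n(\sigma)=n^{-1}\log\{L_n(\sigma)/L_n(\sigma_0)\}$. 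Writing the increments as $X_{t_{i,n}}-X_{t_{i-1,n}}=\sqrt{s_{i,n}^0}\,Z_i$ with $Z_i$ i.i.d.\ standard normal under $\mathbb{P}_{\sigma_0}$ and $s_{i,n}=\int_{t_{i-1,n}}^{t_{i,n}}\sigma^2$, a direct computation gives
\[
\ell_n(\sigma)=\frac{1}{2n}\sum_{i=1}^n\Bigl\{\log\frac{s_{i,n}^0}{s_{i,n}}+\Bigl(1-\frac{s_{i,n}^0}{s_{i,n}}\Bigr)Z_i^2\Bigr\},
\]
whose $\mathbb{P}_{\sigma_0}$-mean is a Riemann sum converging to $-K(\sigma_0,\sigma):=-\tfrac12\int_0^1 g(\sigma_0^2/\sigma^2)$, with $g(x)=x-1-\log x\ge0$. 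The central step I would establish is the uniform convergence $\sup_{\sigma\in\mathcal X}|\ell_n(\sigma)+K(\sigma_0,\sigma)|\convp0$.

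Granting this, consistency follows quickly. Since $\kappa\le\sigma\le K$, the map $\sigma\mapsto K(\sigma_0,\sigma)$ is $L_2$-Lipschitz on $\mathcal X$ and, because $g$ vanishes only at $1$, strictly positive away from $\sigma_0$; as $\mathcal X$ is $L_2$-compact (the uniform Lipschitz and boundedness constraints give equicontinuity, so Arzel\`a--Ascoli applies and $\mathcal X$ is closed under uniform limits), its restriction to the closed set $U^c$ attains a minimum $\beta>0$. On the event $\{\sup_\sigma|\ell_n+K|<\beta/8\}$, whose probability tends to one, one has $N_n\le e^{-7n\beta/8}$, while, fixing $\eta>0$ so small that $K(\sigma_0,\cdot)<\beta/8$ on $V_{\sigma_0,\eta}$ (possible by continuity of $K$ and $K(\sigma_0,\sigma_0)=0$), $D_n\ge e^{-n\beta/4}\Pi(V_{\sigma_0,\eta})$. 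The prior hypothesis \eqref{priorC} enters precisely here, ensuring $\Pi(V_{\sigma_0,\eta})>0$, so that $N_n/D_n\le e^{-5n\beta/8}/\Pi(V_{\sigma_0,\eta})\to0$ on an event of probability tending to one, which is the claim.

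The remaining and principal task is the uniform convergence, which I would handle by splitting $\ell_n+K$ into a deterministic bias $B_n(\sigma)=\mathbb{E}_{\sigma_0}[\ell_n(\sigma)]+K(\sigma_0,\sigma)$ and a centred fluctuation $F_n(\sigma)=\tfrac{1}{2n}\sum_{i=1}^n(1-s_{i,n}^0/s_{i,n})(Z_i^2-1)$. The bias is a Riemann-sum error, and $\sup_\sigma|B_n|\to0$ follows from the Lipschitz regularity of $\sigma$ together with the bounds $\kappa\le\sigma\le K$ by a routine equicontinuity estimate. The fluctuation is the crux, and I expect it to be the main obstacle: for each fixed $\sigma$, $F_n(\sigma)$ is a weighted sum of independent centred $\chi_1^2$ variables with uniformly bounded weights, so Bernstein's inequality yields $\mathbb{P}_{\sigma_0}(|F_n(\sigma)|>t)\le2\exp(-cn\min(t^2,t))$; to pass to the supremum over the infinite-dimensional class I would exploit that the uniform Lipschitz bound makes $\mathcal X$ totally bounded in $\|\cdot\|_\infty$ with log-covering numbers of order $1/\delta$, while $\sigma\mapsto F_n(\sigma)$ is, on a high-probability event, Lipschitz in $\sigma$ with an $O(1)$ constant (since $s_{i,n}\ge\kappa^2/n$ and $n^{-1}\sum_i|Z_i^2-1|=O(1)$). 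A finite-net-plus-interpolation argument, with the net radius $\delta_n\to0$ tuned against the Bernstein tail and the union bound, then delivers $\sup_\sigma|F_n(\sigma)|\convp0$. This uniform law of large numbers over the nonparametric class is where the real work resides; everything else reduces to the deterministic study of $K$ and the single use of the prior support condition \eqref{priorC}.
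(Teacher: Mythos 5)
Your proposal is correct, and its overall architecture --- reduction to $L_2$-balls, the ratio $N_n/D_n$ of integrated likelihood ratios, and a law of large numbers for the normalised log-likelihood ratio holding uniformly over $\mathcal X$ --- coincides with the paper's. The genuine differences lie in two technical steps. First, to separate the numerator's exponent from zero you identify the limiting Kullback--Leibler-type functional $K(\sigma_0,\sigma)=\tfrac12\int_0^1 g(\sigma_0^2/\sigma^2)$ with $g(x)=x-1-\log x$ and invoke $L_2$-compactness of $\mathcal X$ (via Arzel\`a--Ascoli) together with continuity of $K$ to obtain $\inf_{U^c}K=\beta>0$; the paper instead applies the elementary inequality $\log(1+y)\le y-cy^2$ to the deterministic part $T_{1,n}$, which produces the explicit quadratic term $\tfrac{c}{2}\int_0^1(\sigma_0^2-\sigma^2)^2/\sigma^4\ge 2\kappa^2c\varepsilon^2/K^4$ on $\widetilde U_{\sigma_0,\varepsilon}^c$ with no compactness argument and hence a fully quantitative exponent. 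Second, for the uniform convergence of the fluctuation term (the content of Lemma \ref{lemma1A}) you propose Bernstein's inequality for the sub-exponential summands $(1-s^0_{i,n}/s_{i,n})(Z_i^2-1)$ combined with a sup-norm covering-number and interpolation argument, whereas the paper routes the same statement through weak convergence of the process $Q_n$ (Theorem 18.14 of van der Vaart), verifying the marginals with a conditional-moment lemma of Genon-Catalot and Jacod and tightness via the same total boundedness of $\mathcal X$ and the same Lipschitz-in-$\sigma$ estimate you use, with $\sum_i(X_{t_{i,n}}-X_{t_{i-1},n})^2\convp\int_0^1\sigma_0^2$ controlling the random Lipschitz constant. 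Both routes rest on identical pillars --- pointwise concentration plus equicontinuity of the empirical process over a totally bounded class --- so they are interchangeable; yours is more self-contained and yields exponential tail control that would also give rates, while the paper's avoids any explicit entropy computation and delivers explicit constants throughout.
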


\section{Example of a prior}
\label{prior}

In this section we provide an example of a prior satisfying condition \eqref{priorC}. Fix $0<\kappa<K<\infty$ and take a fixed Lipschitz continuous function $f:\mathbb{R}\rightarrow[0,K-\kappa]$ with Lipschitz constant $N>0$ and set $\sigma(t)=\kappa+\int_0^t f(h(s))ds,$ where $h:[0,1]\rightarrow\mathbb{R}$ ranges over the set of H\"older continuous functions of order $\beta\in(0,1/2)$ on $[0,1]$ for some fixed $\beta.$ Then each $\sigma$ maps the interval $[0,1]$ into the interval $[\kappa,K]$ and $\sigma$ is also Lipschitz with Lipschitz constant $K,$ because
\begin{equation*}
|\sigma(t)-\sigma(s)|=\left| \int_s^t f(h(u))du \right|\leq K |t-s|.
\end{equation*}
We take the collection of these functions $\sigma$ as the collection $\mathcal{X}$ from Assumption \ref{standing}~(a). We will now construct a prior $\Pi$ on $\mathcal{X}.$ Let $\widetilde{W}=(\widetilde{W}_t)_{0\leq t\leq 1}$ be a standard Brownian motion over the time interval $[0,1]$ and let $Z$ be a standard normal random variable independent of $\widetilde{W}.$ Define the Brownian motion $\overline{W}=(\overline{W}_t)_{0\leq t\leq 1}$ initialised at $Z$ by $\overline{W}_t=Z+\widetilde{W}_t$ and introduce the process $Y=(Y_t)_{0\leq t \leq 1},$ where
\begin{equation*}
Y_t=\kappa+\int_0^t f(\overline{W}_s)ds.
\end{equation*}
Our prior $\Pi$ on $\mathcal{X}$ will be the law of the process $Y.$

We have to check that the prior $\Pi$ satisfies \eqref{priorC}. To that end take a fixed $\sigma_0(t)=\kappa+\int_0^t f(h_0(s))ds$ and let $\overline{w}$ be a generic realisation of the process $\overline{W},$ so that $\overline{y}_t=\kappa+\int_0^t f(\overline{w}_s)ds$ is the corresponding generic realisation of the process $Y.$ We have
\begin{equation*}
\Pi(V_{\sigma_0,\varepsilon})=\Pi(\overline{y}:\| \overline{y}-\sigma_0 \|_{\infty}<\varepsilon)\geq \Pi\left(\overline{w}:\|\overline{w} - h_0 \|_{\infty}<\frac{\varepsilon}{N}\right),
\end{equation*}
because
\begin{equation*}
\sup_{t\in[0,1]} \left| \int_0^t [ f(\overline{w}_s) - f(h_0(s)) ]ds \right| \leq \| f(\overline{w}) - f(h_0) \|_{\infty}
\leq N \|\overline{w} - h_0 \|_{\infty}.
\end{equation*}
By Lemma 5.3 in \cite{vaart08b},
\begin{multline*}
\Pi\left(\overline{w}:\|\overline{w} - h_0 \|_{\infty}<\frac{\varepsilon}{N}\right)\\
\geq \exp\left(-\inf_{g:\|g-h_0\|_{\infty}<\varepsilon/(2N)}\frac{1}{2}\|g\|_H^2\right)
\Pi\left(\|\overline{W}\|_{\infty}<\frac{\varepsilon}{2N}\right).
\end{multline*}
Here $H$ denotes the Reproducing Kernel Hilbert Space (RKHS) of the process $\overline{W},$ $g\in H,$ while $\|\cdot\|_H$ is the RKHS norm (see \cite{vaart08b} for a detailed treatment of these concepts with a view towards non-parametric Bayesian statistics). In our case $H$ consists of absolutely continuous functions $g:[0,1]\rightarrow\mathbb{R},$ such that $\|g^{\prime}\|_2<\infty,$ while the RKHS norm is given by $\|g\|_H=\sqrt{g^2(0)+\|g^{\prime}\|_2^2},$ see p.\ 1446 in \cite{vaart08a}. Note that the set $\{g\in H:\| g - h_0 \|_{\infty} <\varepsilon/(2N) \}$ is not empty, as $h_0$ can be approximated arbitrarily closely in the $L_{\infty}$-norm by the convolution $h_0\ast k_b$ of $h_0$ with a smooth kernel $k_b(\cdot)=(1/b)k(\cdot/b),$ cf.\ p.\ 1446 in \cite{vaart08a} (we assume that $\|k^{\prime}\|_2<\infty$ and $b\rightarrow 0$). Furthermore, $\Pi(\|\overline{W}\|_{\infty}<\varepsilon/(2N))>0,$ because $\|\overline{W}\|_{\infty}$ has a strictly positive density. Condition \eqref{priorC} easily follows. In case one is interested in a smoother class of dispersion coefficients $\sigma$ than what we have just constructed, one can simply take in the above construction of the prior $\Pi$ a smoother, say $\beta$ times differentiable function $f$, and replace the Brownian motion $\overline{W}$ with a Riemann-Liouville process $R=(R_t)_{0\leq t\leq 1}$ with Hurst parameter $\beta,$
\begin{equation*}
R_t=\sum_{k=0}^{\beta} Z_k t^k+ \int_0^t (t-s)^{\beta-1/2}d \widetilde{W}_t,
\end{equation*}
where $Z_k$'s are standard normal random variables, $\widetilde{W}$ is a standard Brownian motion and $Z_0,Z_1,\ldots,Z_{\beta},\widetilde{W}$ are independent. See Section 4.2 in \cite{vaart08a} for more information on the Riemann-Liouville processes. Arguments similar to the ones given above yield that in this case as well \eqref{priorC} is satisfied.

\section{Discussion}
\label{discussion}

In the present work we established posterior consistency for a statistical model obtained from a simple linear stochastic differential equation. General techniques for proving posterior consistency for a wide range of statistical models are by now well-developed. In the i.i.d.\ setting, broadly speaking, two main approaches exist in the literature: a `classical' approach as epitomised e.g.\ by \cite{barron99} and \cite{schwartz65} (we combine these two papers into one category, because they in some sense make use of assumptions of similar type, although their actual assertions are different), and a martingale approach developed more recently in \cite{walker03} and \cite{walker04}. The first approach was extended to the  setting of independent non-identically distributed observations in \cite{choudhuri04}, see in particular Theorem A.1 there. The second approach was extended to the case of discretely observed Markov processes in \cite{ghosal06}. A general theorem for posterior consistency in \cite{choudhuri04} makes two requirements: firstly, the prior must put sufficient mass in arbitrarily small neighbourhoods of the true parameter (in an appropriate topology), and secondly, a sequence of sieves (increasing sequence of subsets of the parameter set)  guaranteeing existence of certain exponentially consistent tests has to be exhibited; see p.\ 1056 in \cite{choudhuri04} for additional details. Although in our setting the observations $X_{t_{i,n}},i=1,\ldots,n,$ are not independent, the increments $X_{t_{i,n}}-X_{t_{i-1,n}}$ are, and it appears conceivable that Theorem A.1 in \cite{choudhuri04} could be used to establish posterior consistency in our model as well. However, we opted for a different approach, see the proof of our posterior consistency result, Theorem \ref{mainthm}. A similarity shared by Theorem A.1 in \cite{choudhuri04} and Theorem \ref{mainthm} is that both theorems require that the prior puts sufficient mass in arbitrarily small neighbourhoods of the true parameter (in appropriate topologies). A difference is that due to the special structure of our model we do not need to make any reference to tests and sieves, but can establish posterior consistency by directly manipulating the posterior; see the proof of Theorem \ref{mainthm} for details. In this sense our approach to proving posterior consistency appears to be more direct and more elementary than the one that would employ Theorem A.1 in \cite{choudhuri04}. Neither do we make reference to entropy arguments as done e.g.\ in \cite{barron99}. As far as the martingale approach to posterior consistency for ergodic Markov processes is concerned, we can be brief here: ergodicity is irrelevant in our setting and in fact our special sampling scheme seems to make generalisation or modification of the arguments from \cite{ghosal06}, \cite{walker03} and \cite{walker04} impossible.

Next a brief remark on condition \eqref{priorC} on the prior $\Pi$ is in order. Although it is formulated in terms of neighbourhoods in the $L_{\infty}$-norm, the assertion returned by Theorem \ref{mainthm} employs the topology induced by the $L_2$-norm. A `discrepancy' between norms used is however not uncommon in posterior consistency results. See for instance \cite{barron99}.

\section{Proofs}
\label{proofs}

\begin{proof}[Proof of Theorem \ref{mainthm}]
Let $U_{\sigma_0}$ be an arbitrary, but fixed neighbourhood of $\sigma_0$ in the topology $\mathcal{T}$ and let $\widetilde{U}_{\sigma_0,\varepsilon}=\{\sigma\in\mathcal{X}:\| \sigma-\sigma_0 \|_2<\varepsilon\}.$ There exists $\varepsilon>0,$ such that $ \widetilde{U}_{\sigma_0,\varepsilon}\subset U_{\sigma_0},$ and hence $U_{\sigma_0}^c \subset \widetilde{U}_{\sigma_0,\varepsilon}^c.$ Fix such an $\varepsilon.$ In order to prove the theorem, it thus suffices to show that
\begin{equation}
\label{cons}
\Pi(\widetilde{U}_{\sigma_0,\varepsilon}^c|X_{t_{0,n}}\ldots,X_{t_{n,n}}) \convp 0
\end{equation}
as $n\rightarrow\infty.$ Write
\begin{equation}
\label{bayes}
\begin{split}
\Pi(\widetilde{U}_{\sigma_0,\varepsilon}^c|X_{t_{0,n}}\ldots,X_{t_{n,n}})&=\frac{\int_{\widetilde{U}_{\sigma_0,\varepsilon}^c} L_n(\sigma) \Pi(d\sigma)}{ \int_{\mathcal{X}} L_n(\sigma) \Pi(d\sigma) }\\
&=\frac{\int_{\widetilde{U}_{\sigma_0,\varepsilon}^c} R_n(\sigma) \Pi(d\sigma)}{ \int_{\mathcal{X}} R_n(\sigma) \Pi(d\sigma) },
\end{split}
\end{equation}
where $R_n(\sigma)=L_n(\sigma)/L_n(\sigma_0)$ denotes the likelihood ratio. We will separately bound the numerator and denominator on the right-hand side of the last equality in \eqref{bayes} (we will use the notation $D_n$ for the denominator and $N_n$ for the numerator) and then combine the bounds to establish \eqref{cons}. As we will see, the left-hand side of \eqref{cons} in fact decays exponentially fast to zero. Note that when establishing posterior consistency, \cite{barron99} and \cite{walker04} also treat the numerator and denominator in the expression for the posterior separately, but similarity of our approach to the one in those papers largely ends here.

Let $S_n(\sigma)=n^{-1}\log R_n(\sigma).$ Then
$
D_n=\int_{\mathcal{X}} \exp( n S_n(\sigma)) \Pi(d\sigma).
$
Now
\begin{align*}
S_n(\sigma)&=\frac{1}{2}\frac{1}{n}\sum_{i=1}^{n}\log \left( \frac{\int_{t_{i-1,n}}^{t_{i,n}}\sigma_0^2(u)du}{\int_{t_{i-1,n}}^{t_{i,n}}\sigma^2(u)du} \right)\\
&-\frac{1}{2}\frac{1}{n}\sum_{i=1}^{n}\left[ \frac{( X_{t_{i,n}}-X_{t_{i-1,n}} )^2}{\int_{t_{i-1,n}}^{t_{i,n}}\sigma^2(u)du} - \frac{( X_{t_{i,n}}-X_{t_{i-1,n}} )^2}{\int_{t_{i-1,n}}^{t_{i,n}}\sigma_0^2(u)du} \right]\\
&=T_{1,n}(\sigma)+T_{2,n}(\sigma)
\end{align*}
with obvious definitions of $T_{1,n}(\sigma)$ and $T_{2,n}(\sigma).$ Let $\widetilde{\varepsilon}>0$ be a constant with its value to be chosen  appropriately later on. Since
$
D_n\geq \int_{V_{\sigma_0,\widetilde{\varepsilon}}}R_n(\sigma)\Pi(d\sigma),
$
Lemmas \ref{lemma0A} and \ref{lemma1A} from the Appendix and formula \eqref{integralupb} give that with probability tending to one,
\begin{align*}
D_n &\geq \int_{V_{\sigma_0,\widetilde{\varepsilon}}} \exp\left( -\frac{4K}{\kappa^2} \widetilde{\varepsilon} n \right) \Pi(d\sigma)\\
&= \exp\left( -\frac{4K}{\kappa^2} \widetilde{\varepsilon} n \right) \Pi( V_{\sigma_0,\widetilde{\varepsilon}} ).
\end{align*}
By assumption \eqref{priorC}, $\Pi( V_{\sigma_0,\widetilde{\varepsilon}} )>0.$ Fix a constant
$
\beta={5K\widetilde{\varepsilon}}/{\kappa^2}.
$
Then for all $n$ large enough,
\begin{equation*}
\exp\left(-\frac{4K}{\kappa^2}\widetilde{\varepsilon} n \right) \Pi( V_{\sigma_0,\widetilde{\varepsilon}} ) \geq e^{-\beta n}.
\end{equation*}
As a consequence, with probability tending to one,
\begin{equation}
\label{Dnbound}
D_n \geq  e^{-\beta n}
\end{equation}
as $n\rightarrow\infty.$ This is our required lower bound for the denominator $D_n.$

Using similar techniques, we will next treat the numerator $N_n.$ Firstly, note that by elementary arguments one can show that for an arbitrary fixed constant $C>0$ there exists another constant $c>0,$ such that the inequality
\begin{equation*}
\log(1+y)  \leq y -c y^2, \quad -1< y \leq C
\end{equation*}
holds (one can take $c=[2(C+1)]^{-1}$). Hence
\begin{align*}
\log \left( \frac{\int_{t_{i-1,n}}^{t_{i,n}}\sigma_0^2(u)du}{\int_{t_{i-1,n}}^{t_{i,n}}\sigma^2(u)du} \right) & \leq  \frac{\int_{t_{i-1,n}}^{t_{i,n}}[\sigma_0^2(u)-\sigma^2(u)]du}{\int_{t_{i-1,n}}^{t_{i,n}}\sigma^2(u)du}\\
&-c \left( \frac{\int_{t_{i-1,n}}^{t_{i,n}}[\sigma_0^2(u)-\sigma^2(u)]du}{\int_{t_{i-1,n}}^{t_{i,n}}\sigma^2(u)du} \right)^2
\end{align*}
for some constant $c$ independent of $\sigma\in\mathcal{X},i$ and $n.$ Therefore, after a simple, but lengthy computation employing Assumption \ref{standing} (a), cf.\ the proof of Lemma \ref{lemma0A},
\begin{align*}
T_{1,n}(\sigma) & \leq \frac{1}{2}\frac{1}{n} \sum_{i=1}^n \frac{\int_{t_{i-1,n}}^{t_{i,n}}[\sigma_0^2(u)-\sigma^2(u)]du}{\int_{t_{i-1,n}}^{t_{i,n}}\sigma^2(u)du}\\
&-\frac{c}{2}\frac{1}{n}\sum_{i=1}^n \left( \frac{\int_{t_{i-1,n}}^{t_{i,n}}[\sigma_0^2(u)-\sigma^2(u)]du}{\int_{t_{i-1,n}}^{t_{i,n}}\sigma^2(u)du} \right)^2\\
&=\frac{1}{2}\int_0^1 \frac{\sigma_0^2(u)-\sigma^2(u)}{\sigma^2(u)}du\\
&-\frac{c}{2}\int_0^1 \frac{(\sigma_0^2(u)-\sigma^2(u))^2}{\sigma^4(u)} du+O\left(\frac{1}{n}\right),
\end{align*}
where the remainder term is of order $n^{-1}$ uniformly in $\sigma\in\mathcal{X}.$ Hence
\begin{align}
S_n(\sigma)&\leq-\frac{c}{2}\int_0^1 \frac{(\sigma_0^2(u)-\sigma^2(u))^2}{\sigma^4(u)} du \label{snsigma1}\\
&+T_{2,n}(\sigma)+\frac{1}{2}\int_0^1 \frac{\sigma_0^2(u)-\sigma^2(u)}{\sigma^2(u)}du \label{snsigma2}\\
&+O\left(\frac{1}{n}\right) \label{snsigma3}.
\end{align}
To bound from above the term on the right-hand side of inequality \eqref{snsigma1}, use the fact that
\begin{equation*}
\frac{c}{2}\int_0^1 \frac{ ( \sigma_0^2(u) - \sigma^2(u) )^2 }{\sigma^4(u)}du \geq \frac{2\kappa^2 c}{K^4}\varepsilon^2
\end{equation*}
for $\sigma\in \widetilde{U}_{\sigma_0,\varepsilon}^c .$ Furthermore, by Lemma \ref{lemma1A}, uniformly in $\sigma\in \widetilde{U}_{\sigma_0,\varepsilon}^c$ and with probability tending to one as $n\rightarrow\infty,$ the term \eqref{snsigma2} is smaller than any positive number fixed beforehand. So is the term \eqref{snsigma3}. Therefore, uniformly in $\sigma\in \widetilde{U}_{\sigma_0,\varepsilon}^c$ and with probability tending to one as $n\rightarrow\infty,$
\begin{equation*}
S_n(\sigma)  \leq -\frac{\kappa^2c}{K^4}\varepsilon^2,
\end{equation*}
say. Thus with probability tending to one as $n\rightarrow\infty,$
\begin{equation}
\label{Nnbound}
N_n=\int_{{\widetilde{U}^c_{\sigma_0,\varepsilon}}} \exp( n S_n(\sigma)) \Pi(d\sigma)
\leq  \exp\left( -\frac{\kappa^2c}{K^4}\varepsilon^2 n \right).
\end{equation}
This finishes bounding the numerator $N_n.$

We now combine bounds \eqref{Dnbound} and \eqref{Nnbound} to conclude that with probability tending to one as $n\rightarrow\infty,$
\begin{equation*}
\Pi(\widetilde{U}_{\sigma_0,\varepsilon}^c|X_{t_{0,n}}\ldots,X_{t_{n,n}})\leq \exp\left( -\left(\frac{\kappa^2c}{K^4}\varepsilon^2-\frac{5K}{\kappa^2}\widetilde{\varepsilon}\right) n \right).
\end{equation*}
Picking $\widetilde{\varepsilon}$ small enough, so that 
\begin{equation*}
\frac{\kappa^2c}{K^4}\varepsilon^2-\frac{5K}{\kappa^2}\widetilde{\varepsilon}> 0,
\end{equation*}
implies \eqref{cons} and completes the proof of the theorem.
\end{proof}

\appendix
\section*{Appendix}

\begin{lemma}
\label{lemma0A}
Under the same assumptions as in Theorem \ref{mainthm}, for $T_{1,n}(\sigma)$ as in the proof of Theorem \ref{mainthm}, all $\sigma\in V_{\sigma_0,\widetilde{\varepsilon}}$ simulatenously and for $n$ large enough,
$
T_{1,n}(\sigma)\geq -{2K\widetilde{\varepsilon}}/{\kappa^2}.
$
\end{lemma}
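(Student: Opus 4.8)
The quantity $T_{1,n}(\sigma)$ is purely deterministic, so the claim is an analytic estimate that must hold for every realisation and, crucially, uniformly over $\sigma\in V_{\sigma_0,\widetilde{\varepsilon}}$. Writing $A_i=\int_{t_{i-1,n}}^{t_{i,n}}\sigma_0^2(u)\,du$ and $B_i=\int_{t_{i-1,n}}^{t_{i,n}}\sigma^2(u)\,du$, we have $T_{1,n}(\sigma)=\frac{1}{2n}\sum_{i=1}^n\log(A_i/B_i)$. The plan is to bound each summand from below by a linear functional of the increment $A_i-B_i$ and then to control $A_i-B_i$ and $A_i$ using only the class constants $\kappa,K$ and the radius $\widetilde{\varepsilon}$, which automatically delivers the required uniformity.

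First I would invoke the elementary inequality $\log y\geq 1-1/y$, valid for all $y>0$; this is the lower-bound counterpart of the second-order upper bound $\log(1+x)\leq x-cx^2$ used for the numerator in the main proof. Applied with $y=A_i/B_i$ it gives $\log(A_i/B_i)\geq (A_i-B_i)/A_i$, whence $T_{1,n}(\sigma)\geq \frac{1}{2n}\sum_{i=1}^n (A_i-B_i)/A_i$.

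Next I would estimate the two factors. Since $\sigma_0\geq\kappa$ and the interval has length $1/n$, we have $A_i\geq\kappa^2/n$. Since $|\sigma_0^2-\sigma^2|=|\sigma_0-\sigma|(\sigma_0+\sigma)\leq 2K|\sigma_0-\sigma|<2K\widetilde{\varepsilon}$ on $V_{\sigma_0,\widetilde{\varepsilon}}$, integrating over an interval of length $1/n$ yields $|A_i-B_i|\leq 2K\widetilde{\varepsilon}/n$. Consequently each summand satisfies $(A_i-B_i)/A_i\geq -(2K\widetilde{\varepsilon}/n)/(\kappa^2/n)=-2K\widetilde{\varepsilon}/\kappa^2$, and averaging with the prefactor $1/2$ gives $T_{1,n}(\sigma)\geq -K\widetilde{\varepsilon}/\kappa^2$, uniformly over $\sigma\in V_{\sigma_0,\widetilde{\varepsilon}}$ and for every $n$. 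This is even stronger than the stated bound $-2K\widetilde{\varepsilon}/\kappa^2$; the surplus factor of two and the ``$n$ large enough'' proviso enter only if one instead mirrors the main proof and replaces the Riemann sum $\frac{1}{n}\sum_i (A_i-B_i)/A_i$ by the integral $\int_0^1 (\sigma_0^2-\sigma^2)/\sigma_0^2\,du$, picking up an $O(1/n)$ approximation error that the extra factor of two absorbs.

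The one point that genuinely needs care is the uniformity over $\sigma$ asserted by ``all $\sigma\in V_{\sigma_0,\widetilde{\varepsilon}}$ simultaneously''. In the direct route above this is immediate, since every estimate depends on $\sigma$ only through $\kappa,K,\widetilde{\varepsilon}$. Should one prefer the integral-approximation route, the main obstacle is to verify that the sum-to-integral error is $O(1/n)$ uniformly in $\sigma$; this is where the common Lipschitz constant $M$ of the class $\mathcal{X}$ is used, as it bounds the oscillation of the integrands $\sigma_0^2-\sigma^2$ and $\sigma_0^2$ on each subinterval by a fixed multiple of $1/n$ independent of $\sigma$.
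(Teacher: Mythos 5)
Your proof is correct, and after the shared first step it is simpler and sharper than the paper's. Both arguments begin with the same elementary inequality --- your $\log y\geq 1-1/y$ is exactly the paper's $y/(y+1)\leq\log(1+y)$ after the substitution $y\mapsto 1+y$ --- and both arrive at $\log(A_i/B_i)\geq (A_i-B_i)/A_i$ with $A_i=\int_{t_{i-1,n}}^{t_{i,n}}\sigma_0^2$, $B_i=\int_{t_{i-1,n}}^{t_{i,n}}\sigma^2$. From there the paper takes precisely the detour you sketch as the alternative: it replaces the ratio of integrals by the ratio of the integrands at the left endpoint (this is where the Lipschitz constant of the class $\mathcal{X}$ enters), identifies the resulting Riemann sum with $\int_0^1(\sigma_0^2-\sigma^2)/\sigma_0^2\,du$ up to $O(1/n)$ uniformly in $\sigma$, and bounds that integral in absolute value by $2K\widetilde{\varepsilon}/\kappa^2$; the accumulated $O(1/n)$ errors are exactly what force the ``for $n$ large enough'' proviso and are absorbed by the slack between the intermediate bound $-K\widetilde{\varepsilon}/\kappa^2$ and the stated $-2K\widetilde{\varepsilon}/\kappa^2$. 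Your direct termwise estimate $A_i\geq\kappa^2/n$ and $|A_i-B_i|\leq 2K\widetilde{\varepsilon}/n$ avoids all of this: it gives the stronger conclusion $T_{1,n}(\sigma)\geq -K\widetilde{\varepsilon}/\kappa^2$ for every $n$, does not use Lipschitz continuity at all, and makes the uniformity over $V_{\sigma_0,\widetilde{\varepsilon}}$ manifest. The sum-to-integral machinery the paper deploys here is genuinely needed elsewhere (for the upper bound on $T_{1,n}$ in the treatment of the numerator, and in Lemma \ref{lemma1A}), but for this lemma your shortcut is perfectly adequate and arguably preferable.
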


\begin{proof}
The elementary inequality
\begin{equation*}
\frac{y}{y+1}\leq\log(1+y), \quad y>-1
\end{equation*}
gives that
\begin{equation*}
\log \left( \frac{\int_{t_{i-1,n}}^{t_{i,n}}\sigma_0^2(u)du}{\int_{t_{i-1,n}}^{t_{i,n}}\sigma^2(u)du} \right) \geq  \frac{\int_{t_{i-1,n}}^{t_{i,n}}[\sigma_0^2(u)-\sigma^2(u)]du}{\int_{t_{i-1,n}}^{t_{i,n}}\sigma_0^2(u)du}. 
\end{equation*}
Next, employing Assumption \ref{standing} (a) and (c), by a simple computation one can show that
\begin{equation*}
\frac{\int_{t_{i-1,n}}^{t_{i,n}}[\sigma_0^2(u)-\sigma^2(u)]du}{\int_{t_{i-1,n}}^{t_{i,n}}\sigma_0^2(u)du}
=\frac{\sigma_0^2(t_{i-1,n})-\sigma^2(t_{i-1,n})}{\sigma_0^2(t_{i-1,n})}+O\left(\frac{1}{n}\right),
\end{equation*}
where the remainder term is of order $n^{-1}$ uniformly in $\sigma\in\mathcal{X}.$ Therefore,
\begin{equation*}
T_{1,n}(\sigma)\geq \frac{1}{2}\frac{1}{n}\sum_{i=1}^n \frac{\sigma_0^2(t_{i-1,n})-\sigma^2(t_{i-1,n})}{\sigma_0^2(t_{i-1,n})}+O\left(\frac{1}{n}\right).
\end{equation*}
By another simple computation,
\begin{equation*}
\frac{1}{n}\sum_{i=1}^n \frac{\sigma_0^2(t_{i-1,n})-\sigma^2(t_{i-1,n})}{\sigma_0^2(t_{i-1,n})}=\int_0^1 \frac{\sigma_0^2(u)-\sigma^2(u)}{\sigma_0^2(u)}du+O\left(\frac{1}{n}\right),
\end{equation*}
where the remainder term is of order $n^{-1}$ uniformly in $\sigma\in\mathcal{X}.$
For $\sigma\in V_{\sigma_0,\widetilde{\varepsilon}}$ we have under Assumption \ref{standing} (a) that
\begin{equation}
\label{integralupb}
\left|\int_0^1 \frac{\sigma_0^2(u)-\sigma^2(u)}{\sigma_0^2(u)}du\right| \leq \frac{2K}{\kappa^2}\widetilde{\varepsilon}.
\end{equation}
This implies the statement of the lemma.
\end{proof}

\begin{lemma}
\label{lemma1A}
Denote
\begin{equation*}
Q_n(\sigma)=\left|T_{2,n}(\sigma)+\frac{1}{2}\int_0^1 \frac{\sigma_0^2(u)-\sigma^2(u)}{\sigma^2(u)}du\right|,
\end{equation*}
where $T_{2,n}(\sigma)$ is defined in the proof of Theorem \ref{mainthm}. Then under the same assumptions as in Theorem \ref{mainthm},
$\sup_{\sigma\in\mathcal{X}}Q_n(\sigma)\convp 0$
as $n\rightarrow\infty.$ Furthermore, for any fixed $\widetilde{\varepsilon}>0,$ for all $\sigma\in V_{\sigma_0,\widetilde{\varepsilon}}$ simultaneously, with probability tending to one as $n\rightarrow\infty,$
$
T_{2,n}(\sigma) \geq -{2K{\widetilde{\varepsilon}}}/{\kappa^2}.
$
\end{lemma}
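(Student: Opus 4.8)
The plan is to exploit that, under $\mathbb{P}_{\sigma_0}$, the increments $\Delta_i X=X_{t_{i,n}}-X_{t_{i-1,n}}$ are independent and $N(0,a_{0,i})$-distributed, where $a_{0,i}=\int_{t_{i-1,n}}^{t_{i,n}}\sigma_0^2(u)\,du$; writing also $a_i=\int_{t_{i-1,n}}^{t_{i,n}}\sigma^2(u)\,du$, we have
\[
T_{2,n}(\sigma)=-\frac{1}{2n}\sum_{i=1}^n (\Delta_i X)^2\left(\frac{1}{a_i}-\frac{1}{a_{0,i}}\right).
\]
First I would split $T_{2,n}(\sigma)$ into its $\mathbb{P}_{\sigma_0}$-mean and a centred remainder, $T_{2,n}(\sigma)=\ex[T_{2,n}(\sigma)]+Z_n(\sigma)$, and correspondingly bound $Q_n(\sigma)\leq |\ex[T_{2,n}(\sigma)]+I(\sigma)|+|Z_n(\sigma)|$, where $I(\sigma)=\frac12\int_0^1(\sigma_0^2-\sigma^2)/\sigma^2\,du$. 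It then suffices to show that the deterministic bias $\ex[T_{2,n}(\sigma)]+I(\sigma)$ is $O(1/n)$ uniformly in $\sigma\in\mathcal{X}$ and that $\sup_{\sigma}|Z_n(\sigma)|\convp 0$.

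For the bias, since $\ex[(\Delta_i X)^2]=a_{0,i}$ one gets $\ex[T_{2,n}(\sigma)]=-\frac{1}{2n}\sum_i (a_{0,i}-a_i)/a_i$, and a Riemann-sum estimate identical in spirit to the one used in Lemma \ref{lemma0A} — replacing $a_i$ and $a_{0,i}$ by $\sigma^2(t_{i-1,n})/n$ and $\sigma_0^2(t_{i-1,n})/n$ up to $O(1/n^2)$ errors, using the Lipschitz property of $\sigma,\sigma_0$ and the lower bound $\sigma\geq\kappa$ — shows $\ex[T_{2,n}(\sigma)]=-I(\sigma)+O(1/n)$ uniformly in $\sigma$. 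For the pointwise behaviour of the fluctuation, $Z_n(\sigma)=-\frac{1}{2n}\sum_i \xi_i b_i$ with $\xi_i=(\Delta_i X)^2-a_{0,i}$ independent and centred, $\var \xi_i=2a_{0,i}^2$, and deterministic coefficients $b_i=1/a_i-1/a_{0,i}$; since $b_i^2 a_{0,i}^2=(a_{0,i}-a_i)^2/a_i^2\leq 4K^2\|\sigma-\sigma_0\|_\infty^2/\kappa^4$ is $O(1)$, we get $\var Z_n(\sigma)=O(1/n)$ uniformly, hence $Z_n(\sigma)\convp 0$ for each fixed $\sigma$ by Chebyshev.

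The main obstacle is upgrading this pointwise convergence to the uniform statement $\sup_{\sigma\in\mathcal{X}}|Z_n(\sigma)|\convp 0$ over the infinite-dimensional class $\mathcal{X}$. Here I would use that $\mathcal{X}$ is compact in $\|\cdot\|_\infty$ (Arzel\`a--Ascoli, the class being uniformly bounded and uniformly Lipschitz) together with a stochastic equicontinuity bound. A direct computation gives, for $\sigma,\sigma'\in\mathcal{X}$, that $|1/a_i-1/a_i'|\leq 2Kn\|\sigma-\sigma'\|_\infty/\kappa^4$, whence $Z_n$ (and likewise $T_{2,n}$) is Lipschitz in $\sigma$ with a random constant $L_n\leq \frac{K}{\kappa^4}\sum_i (\Delta_i X)^2+\frac{K^3}{\kappa^4}$; since $\sum_i(\Delta_i X)^2$ converges to $\int_0^1\sigma_0^2\,du$, the constant $L_n$ is tight. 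Covering $\mathcal{X}$ by finitely many $\delta$-balls, bounding the maximum over the (finite) set of centres by the pointwise result and the oscillation inside each ball by $L_n\delta$, and finally sending $\delta\to 0$ after invoking tightness of $L_n$, yields the uniform convergence. Combined with the uniform $O(1/n)$ bias bound this proves $\sup_\sigma Q_n(\sigma)\convp 0$.

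The second assertion then follows with no further probabilistic work: from $Q_n(\sigma)=|T_{2,n}(\sigma)+I(\sigma)|$ we have $T_{2,n}(\sigma)\geq -I(\sigma)-Q_n(\sigma)$, while for $\sigma\in V_{\sigma_0,\widetilde{\varepsilon}}$ the elementary estimate $|\sigma_0^2-\sigma^2|\leq 2K\|\sigma-\sigma_0\|_\infty$ together with $\sigma\geq\kappa$ gives $|I(\sigma)|\leq K\widetilde{\varepsilon}/\kappa^2$. Since by the first part $\sup_\sigma Q_n(\sigma)<K\widetilde{\varepsilon}/\kappa^2$ with probability tending to one, I obtain $T_{2,n}(\sigma)\geq -2K\widetilde{\varepsilon}/\kappa^2$ for all $\sigma\in V_{\sigma_0,\widetilde{\varepsilon}}$ simultaneously, as claimed.
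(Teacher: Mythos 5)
Your proposal is correct and follows essentially the same route as the paper: the same split of $T_{2,n}(\sigma)+I(\sigma)$ into a deterministic Riemann-sum bias of order $O(1/n)$ plus a centred fluctuation controlled through its second moment, the same Lipschitz-in-$\sigma$ bound with random constant proportional to $\sum_i(X_{t_{i,n}}-X_{t_{i-1,n}})^2$, the same appeal to Arzel\`a--Ascoli for total boundedness of $\mathcal{X}$ in the supremum norm, and the same derivation of the second assertion from the first. The only difference is presentational: where you argue by Chebyshev and an explicit finite covering, the paper packages the pointwise step via Lemma 9 of Genon-Catalot and Jacod and the uniformity step via Theorem 18.14 of van der Vaart (1998), so your version is a self-contained unwinding of the same argument.
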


\begin{proof}
The first statement of the lemma will be derived from an application of Theorem 18.14 in \cite{vaart98}. In particular, viewing $Q_n$ as a process on $\mathcal{X}$ with bounded sample paths, we will show that it converges in distribution to a zero process on $\mathcal{X}.$ The first statement of the lemma will then be a consequence of equivalence of convergence in distribution and in probability for constant limits. Note that in order to circumvent possible (non)-measurability issues, outer probability is employed in the formulation of Theorem 18.14 in \cite{vaart98} (see Section 18.2 in \cite{vaart98} for more information on outer probability). Since no such problems will arise in our setting, we can instead directly work under probability $\mathbb{P}_{\sigma_0}.$ Indeed, the summands in $T_{2,n}(\sigma)$ are of the form $F_{i,n}(\sigma)(X_{t_{i,n}}-X_{t_{i-1,n}})^2$, with $F_{i,n}$ the obvious functional of $\sigma$. Hence taking the supremum over $\sigma$ does not affect the measurability property of $T_{2,n}(\sigma).$

In order to apply Theorem 18.14 from \cite{vaart98}, we need to verify its conditions. In our setting they reduce to the following ones: firstly, marginal vectors of $Q_n$ must converge in distribution to zero vectors, i.e.\
\begin{equation}
\label{eq10}
(Q_n(\sigma_1),\ldots,Q_n(\sigma_{\ell}))\convd (\underbrace{0,\ldots,0}_{\ell}), \quad \ell\in\mathbb{N}.
\end{equation}
Secondly, the tightness condition must be satisfied: for arbitrary constants $\eta>0$ and $\xi>0,$ one must be able to find a partition of $\mathcal{X}$ into finitely many $\mathcal{X}_1,\ldots,\mathcal{X}_{\ell},$ such that
\begin{equation}
\label{eq11}
\limsup_{n\rightarrow\infty}\mathbb{P}_{\sigma_0}\left( \sup_{1\leq k\leq \ell}\sup_{\sigma_1,\sigma_2\in\mathcal{X}_k} |Q_n(\sigma_1) - Q_n(\sigma_2)| \geq \xi \right)\leq \eta.
\end{equation}

Denote 
\begin{equation*}
\mathcal{F}_{i,n}=\sigma(X_{t_j,n},j=1,\ldots,i)
\end{equation*}
and
\begin{equation*}
\chi_{i,n}(\sigma)=-\frac{1}{2}\frac{1}{n}( X_{t_i,n} - X_{t_{i-1,n}} )^2 \frac{ \int_{t_{i-1,n}}^{t_{i,n}} [\sigma_0^2(u) -\sigma^2(u)]du }{ \int_{t_{i-1,n}}^{t_{i,n}} \sigma_0^2(u) du \int_{t_{i-1,n}}^{t_{i,n}} \sigma^2 (u)du }.
\end{equation*}
Also let $\ex$ be the expectation operator with respect to measure $\mathbb{P}_{\sigma_0}.$ Note that
\begin{align*}
\sum_{i=1}^n \ex(\chi_{i,n}(\sigma)|\mathcal{F}_{i-1,n})&=-\frac{1}{2}\frac{1}{n}\sum_{i=1}^n \frac{ \int_{t_{i-1,n}}^{t_{i,n}} [\sigma_0^2(u) -\sigma^2(u)]du }{ \int_{t_{i-1,n}}^{t_{i,n}} \sigma^2 (u)du }\\
&=-\frac{1}{2}\int_0^1 \frac{\sigma_0^2(u)-\sigma^2(u)}{\sigma^2(u)}du+O\left(\frac{1}{n}\right),
\end{align*}
where the remainder term is of order $n^{-1}$ uniformly in $\sigma\in\mathcal{X}.$ Furthermore, Assumption \ref{standing} (a) yields that
\begin{equation*}
\ex(\chi_{i,n}^2(\sigma)|\mathcal{F}_{i-1,n})=\frac{3}{4n^{2}}\left( \frac{ \int_{t_{i-1,n}}^{t_{i,n}} [\sigma_0^2(u) -\sigma^2(u)]du }{ \int_{t_{i-1,n}}^{t_{i,n}} \sigma^2 (u)du } \right)^2=O\left(\frac{1}{n^{2}}\right),
\end{equation*}
where the order bound is uniform in $\sigma\in\mathcal{X}.$ It follows that
\begin{equation*}
\sum_{i=1}^n \ex(\chi_{i,n}^2(\sigma)|\mathcal{F}_{i-1,n})\rightarrow 0.
\end{equation*}
Lemma 9 in \cite{genon93} then implies that $Q_n(\sigma)\convp 0.$  This verifies \eqref{eq10}.

We will now check \eqref{eq11}. Fix $\xi$ and $\eta$ in \eqref{eq11}. By a lengthy, but simple computation employing Assumption \ref{standing} (a) and the triangle inequality,
\begin{equation}
\label{eq*}
 |Q_n(\sigma_1) - Q_n(\sigma_2)|\\
 \leq \frac{K}{\kappa^4}\|\sigma_1-\sigma_2\|_{\infty}\sum_{i=1}^n {(X_{t_{i,n}}-X_{t_{i-1},n})^2}+\frac{K^3}{\kappa^4}\|\sigma_1-\sigma_2\|_{\infty}.
\end{equation}
By the Arzel\`a-Ascoli theorem, under Assumption \ref{standing} (a) the family $\mathcal{X}$ is totally bounded for the supremum metric. By definition this means that for every $\zeta>0$ there exists a finite set $\widetilde{\mathcal{X}}\subset\mathcal{X},$ such that for any $\sigma\in\mathcal{X}$ there is some $\widetilde{\sigma}\in\widetilde{\mathcal{X}}$ with $\|\sigma-\widetilde{\sigma}\|_{\infty}<\zeta/2.$ This and the triangle inequality imply existence of a finite partition $\mathcal{X}_1,\ldots,\mathcal{X}_{\ell}$ of $\mathcal{X},$ such that
\begin{equation}
\label{eq**}
\sup_{1\leq k\leq\ell}\sup_{\sigma_1,\sigma_2\in\mathcal{X}_k} \| \sigma_1-\sigma_2 \|_{\infty}<\zeta.
\end{equation}
Furthermore, by the definition of the quadratic variation of the process $X,$
\begin{equation}
\label{eq***}
\sum_{i=1}^n {(X_{t_{i,n}}-X_{t_{i-1},n})^2}\convp \int_0^1 \sigma_0^2(u)du.
\end{equation}
Combination of \eqref{eq*}--\eqref{eq***} yields \eqref{eq11} for $\zeta$ small enough,
and consequently the first statement of the lemma too. The second statement of the lemma is a consequence of the first one, the fact that $\sigma\in V_{\sigma_0,\widetilde{\varepsilon}},$ an analogue of inequality \eqref{integralupb},
\begin{equation*}
\left| \frac{1}{2} \int_0^1 \frac{ \sigma_0^2(u) - \sigma^2(u) }{ \sigma^2(u) }du \right| \leq \frac{K}{\kappa^2}\widetilde{\varepsilon},
\end{equation*}
and of a simple rearrangement
\begin{align*}
T_{2,n}(\sigma)&=T_{2,n}(\sigma)+\frac{1}{2}\int_0^1 \frac{\sigma_0^2(u)-\sigma^2(u)}{\sigma^2(u)}du\\
&-\frac{1}{2}\int_0^1 \frac{\sigma_0^2(u)-\sigma^2(u)}{\sigma^2(u)}du.
\end{align*}
This completes the proof of the lemma.
\end{proof}

\bibliographystyle{plainnat}

\end{document}